\pdfoutput=1
%
%

\documentclass[11pt]{amsart}

\usepackage{hyperref} 

\hypersetup{
  pdftitle={Tetrachromagea},
  pdfauthor={Jimmy Dillies},
  pdfsubject={},
  pdfkeywords={},
  colorlinks=true,
  breaklinks=true,
  bookmarksopen=true,
  bookmarksnumbered=true,
  pdfpagemode=UseOutlines,
  plainpages=false,
  linkcolor=blue,
  anchorcolor=green}

\newtheorem{thm}{Theorem}[section]
\newtheorem{prop}{Proposition}[section]
\newtheorem{lem}{Lemma}[section]
\newtheorem{pro}{Properties}[section]
\newtheorem{defi}{Definition}[section]
\newtheorem{rmk}{Remark}[section]
\newtheorem{ex}{Example}[section]

\usepackage{amsmath}

\usepackage{graphicx}

\newcommand{\IF}{\mathbb{F}}

\newcommand{\fd}{\mathfrak{d}}

\title{Tetrachromagea} 

\author{Jimmy Dillies}
\thanks{The author would like to thank Gautier Berck for sharing his thesis which inspired this work~\cite{Ber}.}
\dedicatory{Dedicated to J.Dalton.}
\date{}


\begin{document}


\begin{abstract}
\noindent We construct a moduli space of four colorings on planar cubic graphs. 
More precisely, we introduce the notion of weak Hamiltonian, a generalization of Hamiltonian cycles, and relate it to 4-colorings. 
Weak Hamiltonians have a form of deformation, which we call mutation, which gives them a graph structure, the Weak Hamiltonian graph ($\fd$).
This graph encodes the different colorings as 3 vertex cliques.
Identifying vertices on these cliques, we obtain a new graph, the chromatic graph ($\chi$), whose vertices are exactly the colorings of the original graph.
Also, this construction gives a heuristic argument on why $4$ colors are sufficient to color planar maps. 
\end{abstract}

\maketitle 

\setcounter{tocdepth}{1}
\tableofcontents


\section{Introduction}

The four color problem, or Guthrie's problem, asks whether any planar map can be colored using four shades so that no neighboring regions share the same color.
After a rich, long and notorious history\footnote{See~\cite{BLW} for a detailed account.}, the problem was finally solved by Appel and Haken in 1976~\cite{AH1,AH2}.
They reduced the problem to $1936$ cases which they checked by computer . 
In 1996, Robertson, Sanders, Seymour and Thomas presented a new proof where the computer only had to go through $633$ cases~\cite{RSST}.
In this work, we do not worry about the existence of a coloring but rather about how different colorings are related.
To this purpose, we introduce the concept of weak Hamiltonian. 
Weak Hamiltonians are a loosened form of Hamiltonians in planar cubic graphs and we show how they are actually equivalent to four colorings.
Weak Hamiltonian also have extra structure in form of mutations: each weak Hamiltonian can be transformed in two or more different weak Hamiltonians. 
Mutations endow the set of weak Hamiltonians of a graph structure (the weak Hamiltonian graph, $\fd$) and, hereby, the set of colorings of a graph structure, the chromatic graph $\chi$.
These graphs are the natural framework to discuss problems such as which graphs admit a unique four coloring (see~\cite{Fow} for related discussions).

\subsection{Conventions}

As in Lando and Zvonkin~\cite{LZ}, we define a {\bf map} as a graph embedded on an oriented surface. 
In particular, all edges incident to a same vertex admit a natural cyclic order. 
Moreover, we request that our maps be bridgeless.
Otherwise, we mainly use the language of Diestel~\cite{Die}.

\section{Weak Hamiltonians}

In this section, we simply require our maps to be cubic (i.e. all vertices have degree $3$).

\begin{defi}
A {\bf weak Hamiltonian} is a 2-factor all of whose cycles have even length.
\end{defi}

We will call {\bf weakly Hamiltonian} a graph which admits a weak Hamilton cycle. 
In particular, all Hamiltonian cubic graphs are weakly Hamiltonian as they have an even number of vertices (the number of half edges).

The converse is not true: not all weakly Hamiltonian graphs are Hamiltonian. 
Note that there are graphs, such as the biclique $K_{2,3}$ -- or, for that matter, any graph with an odd number of vertices --, which are neither Hamiltonian, nor weakly Hamiltonian.

\subsection{Mutations}

Let $H$ be a weak Hamilton cycle on $G$ and $\{C_{i}\}_{i=1,\ldots,N}$ its cycles. 
For each cycle $C_{i}$ pick a 1-factor $F_{i}$. 
The complement of a weak Hamiltonian $C$ is a 1-factor of the original map which we denote by $C^{\perp}$.

\begin{defi}
The $(F_{1},\ldots,F_{N})$ {\bf mutation} of $C$ is the map $\mu_{F_{1},\ldots,F_{N}}(C)$ obtained by taking the union of the $F_{i}$ and $C^{\perp}$.
\end{defi}

\begin{ex}
Consider the prism graph $\Pi_4$ with the weak Hamiltonian consisting of the internal and external cycles -- see Figure~\ref{fig:wham}. 
We pick as 1-factors on each cycle the dashed edges.
The mutated weak Hamiltonian is the one on the right of the figure.
\begin{figure}[!h]
{\includegraphics[height=1.5cm]{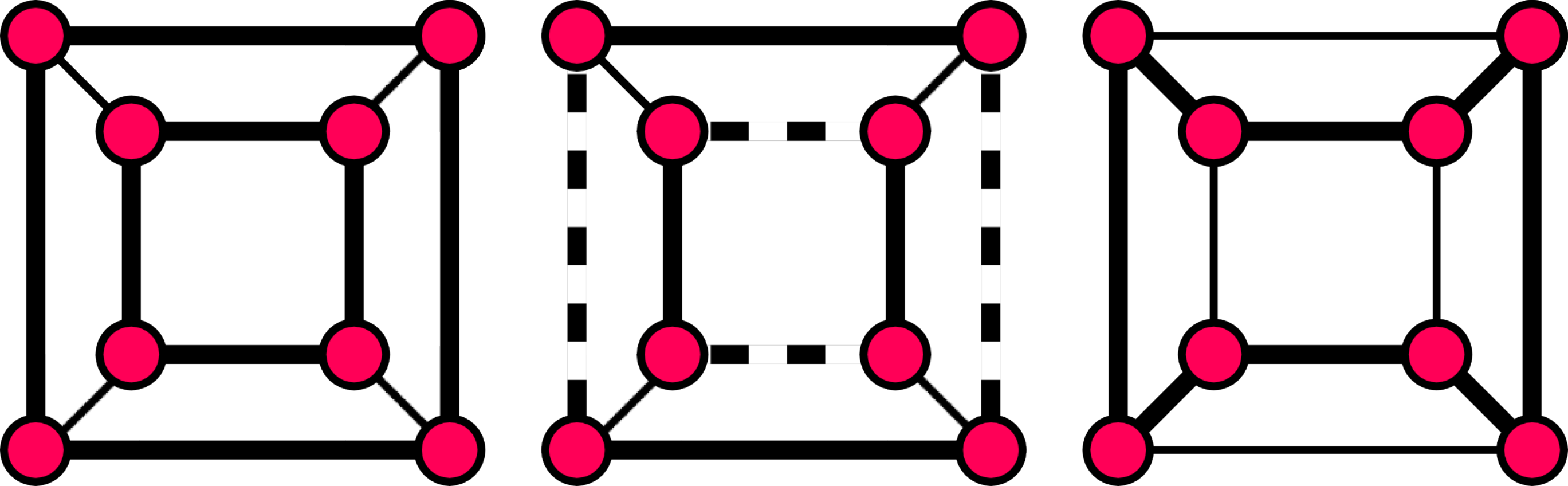}}
\caption{The original weak Hamiltonian is depicted on the left; the 1-factors $F_{1}$, $F_{2}$ are represented as dashed edges in the center and, the mutated weak Hamiltonian is on the right.}
\label{fig:wham}
\end{figure}

\end{ex}

\begin{lem}
\label{lem:freecycle}
Any mutation of a weak Hamiltonian is a weak Hamiltonian. 
\end{lem}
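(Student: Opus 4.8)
The plan is to verify the two defining conditions of a weak Hamiltonian for $\mu := \mu_{F_1,\ldots,F_N}(C)$ separately: first that it is a $2$-factor, and only afterwards that all of its cycles have even length.

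For the $2$-factor condition I would argue by a local degree count at each vertex $v$. Since the cycles $C_1,\ldots,C_N$ partition $V(G)$, the vertex $v$ lies in exactly one of them, say $C_i$, and the chosen $1$-factor $F_i$ of that cycle contributes exactly one edge incident to $v$. On the other hand, $C^{\perp}$ is a $1$-factor of the map (the complement of a $2$-factor in a cubic graph is $1$-regular, as recorded above), so $v$ is incident to exactly one edge of $C^{\perp}$ as well. These two edges are distinct, because every edge of $F_i$ belongs to $C$ while no edge of $C^{\perp}$ does. Hence $v$ has degree exactly $2$ in $\mu$, and since this holds for every vertex, $\mu$ is a $2$-regular spanning subgraph, i.e.\ a disjoint union of cycles covering $V(G)$.

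The parity condition is where the structure of the construction really enters. I would observe that the edge set of $\mu$ splits into two classes: the edges coming from the $F_i$, all of which lie in $C$, and the edges of $C^{\perp}$, none of which lie in $C$. By the degree analysis above, every vertex is incident to exactly one edge of each class, so this splitting is a proper $2$-edge-coloring of $\mu$. Consequently, as one traverses any cycle of $\mu$ the two classes must alternate; the cycle therefore contains equally many edges from $\bigcup_i F_i$ and from $C^{\perp}$, and so has even length.

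I do not expect a serious obstacle: once the edges are sorted into the ``lies in $C$'' and ``does not lie in $C$'' classes, the alternation is forced and the even length is immediate. The only point needing a little care is confirming that the two edges meeting at each vertex are genuinely distinct, so that $\mu$ is honestly $2$-regular rather than having an accidental degree drop; this follows at once from $F_i \subseteq C$ together with $C^{\perp}\cap C = \emptyset$.
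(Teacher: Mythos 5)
Your proposal is correct and follows essentially the same route as the paper's proof: the paper also verifies the $2$-factor condition by a local (degree) argument at each vertex and deduces even cycle lengths from the forced alternation between edges of $C^{\perp}$ and edges of the $F_i$. Your write-up simply makes explicit the details (each vertex gets exactly one edge from its $F_i$ and one from $C^{\perp}$, and these are distinct) that the paper leaves to the reader.
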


\begin{proof}
It is clear that the mutation is a 2-factor as the criterion is local. 
Moreover all cycles are even since the mutation consists of alternating edges coming from $C^{\perp}$ and the $F_{i}$.
\end{proof}

It follows directly from the definition that 

\begin{pro} Given any cubic map:
\label{pro:properties}
\begin{enumerate} 
\item A map is the union of a weak Hamiltonian and one of its mutations.
\item Each weak Hamiltonian with $N$ cycles admits $2^{N}$ mutations.
\item A weak Hamiltonian $H$ can mutate to $H'$ iff $H'$ can mutate to $H$.
\end{enumerate}
\end{pro}

Note that this is not the first attempt to study the four color problem using modifications of the graph.
However, in general it is the original graph that is deformed and not underlying structures.
In particular, one often tries to reduce the graph by erasing and/or collapsing edges in a manner compatible with the computation of the chromatic polynomial.
In the above construction, the original graph remains unchanged, it is the weak Hamiltonians which are transformed.

\section{Four Coloring}

The first part of this section is well known and shows why it is sufficient to consider cubic graphs when considering 4 colorability.
We include it as the proof has a geometric flavor and, also, as the usual argument refers to the dual set up where colors are assigned to vertices.
In the second part we show how the existence of a four coloring is equivalent to the existence of a weak Hamiltonian.

\subsection{To cubic graphs}

Recall that  a {map} is a graph embedded on a oriented surface. 
We think of the faces as \emph{regions} and of the edges as \emph{boundaries}.
An {\bf $n$-coloring} of our map is function $\phi$ from the set of faces of the graph to the set $J_{n}=\{1,\ldots,n\}$ such that no two neighboring regions have the same image.\footnote{Our definition is slightly archaic and dual to the modern general convention which is to call a coloring a function on the vertices.}

\begin{defi}
The {\bf blowup} of the map $G$ at a vertex $x$ is the map $\tilde{G}_x$ where $x$ is replaced by one vertex per incident edge and a collection of edges joining the new vertices in a cycle oriented as the original edges.
\end{defi}

\begin{figure}[!ht]
{\includegraphics[height=1.5cm]{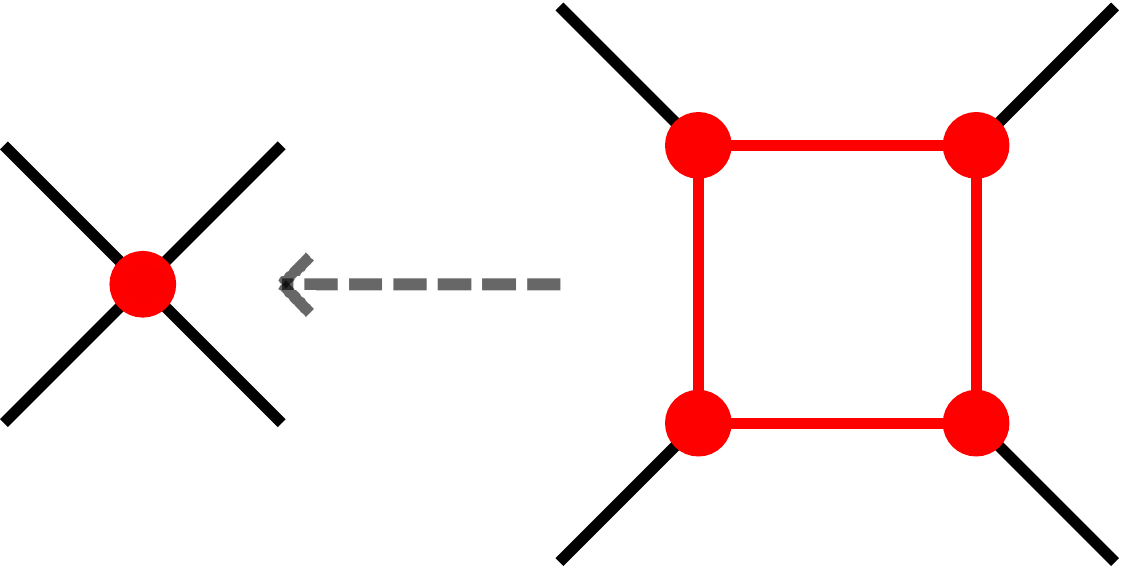}}
\caption{Local picture: the vertex $x$ and its blowup.}
\label{fig:blowup}
\end{figure}

\begin{defi}
Let $G$ be a map, its {\bf resolution} $\tilde{G}$ is the successive blowup at all points of degree $4$ or higher. 
\end{defi}

Note that the resolution is well defined.
The blowup at a point reduces by one the number of vertices of degree at least $4$ and we are thus certain that the process terminates.
Moreover, since the blowup is a local construction, the order of the blowups does not matter. 
The following statement is classic, we have simply rephrased the proofs in terms of blow-ups:

\begin{prop}
All planar maps are four colorable if and only if all maps of degree $3$ are four colorable.
\end{prop}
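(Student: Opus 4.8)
The statement is an "if and only if," and one direction is immediate: if every planar map is four colorable, then in particular every cubic (degree $3$) planar map is four colorable, since cubic maps are themselves planar maps. So the entire content lies in the reverse direction, and that is where I would concentrate the argument. The plan is to reduce the coloring problem on an arbitrary planar map to the coloring problem on its resolution $\tilde{G}$, which by construction is a cubic map, and then transport a four coloring of $\tilde{G}$ back to one of $G$.

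The key tool is the \emph{blowup} operation already defined in the excerpt. First I would observe that the resolution $\tilde{G}$ of any planar map $G$ is again a planar map (blowing up a vertex of degree $\geq 4$ can be performed in a small disk around that vertex, so planarity is preserved) and is cubic by the definition of resolution. Next, and this is the crux, I would analyze what a blowup does to the faces. Blowing up a vertex $x$ of degree $d$ replaces $x$ by a small $d$-gon, which introduces exactly one new face (the interior of that $d$-gon) but does \emph{not} merge or separate any of the original regions surrounding $x$: each region incident to $x$ survives, and two regions are adjacent across an edge in $\tilde{G}_x$ precisely when they were adjacent across $x$ in $G$ — with the sole new adjacencies being between the old regions and the single new small face. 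The main point to verify is therefore that the new face can always be assigned a color once the surrounding faces are colored.

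Granting this, I would set up the transport of colorings. Given a four coloring $\tilde{\phi}$ of $\tilde{G}$, I would define a coloring $\phi$ of $G$ by assigning to each region of $G$ the color its corresponding (un-subdivided) region receives in $\tilde{G}$; since the blowup preserves all original adjacencies, $\phi$ is a proper four coloring of $G$. The step I expect to be the genuine obstacle runs in the opposite logical direction, needed to make the reduction faithful: I must confirm that \emph{existence} of a four coloring really does pass through the blowup, i.e.\ that we lose no colorability by inserting the new small face. The new $d$-gon face is adjacent, in $\tilde{G}_x$, only to the regions that originally met at $x$; its added edges lie along the old boundaries. Here I would invoke the cyclic structure of a cubic map: after full resolution every vertex has degree $3$, so each new face created during the process is bounded and meets its neighbors in a controlled pattern, and I would argue that a color for it is always available — potentially appealing to the fact that we only ever need that the four available colors suffice for the configurations produced by iterated blowups, which is exactly the hypothesis being assumed for the cubic case. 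I would then note that the resolution terminates (as already remarked in the excerpt, each blowup strictly decreases the number of vertices of degree $\geq 4$), so finitely many such steps complete the reduction, and combining the two directions yields the equivalence.
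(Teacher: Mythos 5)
Your argument is essentially the paper's: pass to the resolution $\tilde{G}$, which is planar and cubic, invoke the hypothesis to four color it, and pull the coloring back to $G$ using the fact that blowups preserve the adjacencies among the original faces. The only place you go astray is the perceived obstacle about whether a color is ``available'' for the new small $d$-gon face: no such step is needed, because the four colorability of $\tilde{G}$ comes directly from the hypothesis applied to the cubic map $\tilde{G}$ (not from extending a coloring of $G$ through the blowup), and the new faces are simply discarded when the coloring is restricted to the faces of $G$.
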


\begin{proof}
One direction is immediate.
Assume now that all maps of degree $3$ are four colorable.
Let $G$ be a general map and $\tilde{G}$ its resolution.
By hypothesis $\tilde{G}$ admits some $4$-colouring $\phi$.
For a face $F$ of $G$, we write $\tilde{F}$ for the corresponding face on the resolution.
Define the function $\psi$ on the faces of $G$ as 
$$\psi(F):=\phi(\tilde{F})$$
The map $\psi$ is a four coloring of $G$ as two faces  $F_1$ and $F_2$ are neighbors if and only if $\tilde{F_1}$ and $\tilde{F_2}$ are.
\end{proof}

\subsection{Equivalence}

Let us begin by recalling the the Jordan Curve Theorem.

\begin{thm}[Jordan Curve Theorem]
The complement of a simple closed planar curve is made of two components. 
\end{thm}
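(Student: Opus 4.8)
The plan is to pass to the one-point compactification $S^2 = \mathbb{R}^2 \cup \{\infty\}$ and to rephrase the statement in homological terms. Writing $J$ for the image of the simple closed curve, recall that the number of connected components of an open set $U$ equals $1 + \operatorname{rank}\tilde{H}_0(U)$; it therefore suffices to prove $\tilde{H}_0(S^2 \setminus J) \cong \mathbb{Z}$. Exactly one of the two resulting components of $S^2 \setminus J$ contains $\infty$, and deleting that single point from an open subset of a surface leaves it connected, so the planar complement $\mathbb{R}^2 \setminus J$ inherits precisely two components as well. The engine of the computation will be the reduced Mayer--Vietoris sequence.

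The crucial preliminary is an \emph{arc lemma}: the complement in $S^2$ of an embedded arc $a\colon [0,1] \to S^2$ is not merely connected but has vanishing reduced homology in every degree. Granting this, I would decompose $J$ into two arcs $A_+$ and $A_-$ meeting exactly at two points $p,q$, and set $U = S^2 \setminus A_+$ and $V = S^2 \setminus A_-$. These are open, with $U \cap V = S^2 \setminus J$ and $U \cup V = S^2 \setminus \{p,q\}$; the latter deformation retracts onto a circle, so $\tilde{H}_1(U\cup V)\cong\mathbb{Z}$ and $\tilde{H}_0(U\cup V)=0$. The relevant stretch of the reduced Mayer--Vietoris sequence is
\[
\tilde{H}_1(U)\oplus\tilde{H}_1(V) \;\to\; \tilde{H}_1(U\cup V) \;\to\; \tilde{H}_0(U\cap V) \;\to\; \tilde{H}_0(U)\oplus\tilde{H}_0(V).
\]
By the arc lemma the two outer terms vanish, so the sequence collapses to $0 \to \mathbb{Z} \to \tilde{H}_0(S^2\setminus J)\to 0$, yielding $\tilde{H}_0(S^2\setminus J)\cong\mathbb{Z}$ and hence exactly two components.

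The main obstacle is the arc lemma itself, which is where all the topological content of the Jordan Curve Theorem genuinely hides. I would prove it by induction, bisecting the parameter interval and applying Mayer--Vietoris to the complements of the two half-arcs, whose overlap is the complement of a single interior point (contractible, hence acyclic). A putative nonzero class in $\tilde{H}_i$ of the complement of the whole arc must, by the injectivity coming from that sequence, remain nonzero in the complement of at least one of the two halves; iterating produces a nested sequence of sub-arcs whose intersection is a single point, whose complement is $S^2$ minus a point and thus acyclic. The delicate step is the passage to the limit: one must verify by a compactness or direct-limit argument that a cycle representing the class, being compact, already bounds at some finite stage of the bisection, contradicting its persistence. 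An elementary alternative would prove separation first for polygonal curves and then approximate a general Jordan curve, but there the hard part migrates to showing that the complement has \emph{at least} two components for the approximating polygons while controlling the limit, so I would favour the homological route as the cleaner and more rigorous path.
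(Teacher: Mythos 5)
The paper does not actually prove this statement: the Jordan Curve Theorem is recalled there as classical background, stated without proof and used as input to the two-colouring proposition that follows it. So there is no in-house argument to compare yours against; what you have written is the standard homological proof (essentially Hatcher's Proposition 2B.1), and as a sketch it is sound: the reduction to $\tilde{H}_0(S^2\setminus J)\cong\mathbb{Z}$, the decomposition of $J$ into two arcs, the Mayer--Vietoris computation with $U=S^2\setminus A_+$ and $V=S^2\setminus A_-$, and the descent from $S^2$ back to the plane by removing $\infty$ are all correct.

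Two points deserve attention. First, in the bisection step of the arc lemma you say that the \emph{overlap} of the complements of the two half-arcs is the complement of a single interior point. That is backwards: the overlap $U\cap V$ is the complement of the \emph{whole} arc, while it is the union $U\cup V=S^2\setminus\{a(1/2)\}$ that is the punctured (hence acyclic) sphere. Your subsequent reasoning --- that acyclicity of $U\cup V$ forces $\tilde{H}_i(U\cap V)\to\tilde{H}_i(U)\oplus\tilde{H}_i(V)$ to be injective, so a nonzero class survives in at least one half --- shows you have the correct picture, but the sentence as written would not survive refereeing. Second, the compactness step at the end of the arc lemma is where all the content lives, and you only describe it rather than carry it out; still, you name exactly the right mechanism (a bounding chain is a compact subset of $S^2\setminus\{a(p)\}$, hence avoids $a(I_n)$ for $n$ large, contradicting persistence of the class), so the gap is one of detail, not of idea. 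With those two repairs the argument is complete and, if anything, more self-contained than the paper, which simply defers to the classical literature.
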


A simple closed planar curve is often called a {\bf Jordan curve}.
A weak Hamiltonian on a planar graph consists of a disjoint union of Jordan curves. 
While there are now \`a priori more than $2$ regions, we have a milder version of the above theorem:

\begin{prop}
\label{prop:2color}
A weak Hamiltonian on a planar map $G$ is $2$-coloured map.
\end{prop}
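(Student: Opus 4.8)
The plan is to $2$-color the regions cut out by the weak Hamiltonian using a nesting-parity invariant, and to show that crossing a single boundary arc flips this parity. Write $H = C_{1} \cup \cdots \cup C_{N}$ for the cycles of the weak Hamiltonian. By planarity each $C_{i}$ is a Jordan curve, and since $H$ is a $2$-factor each vertex lies on exactly one cycle, so the $C_{i}$ are pairwise disjoint as curves in the plane. The faces of the map are the connected components of the complement of $H$, and these are exactly the regions to be colored.

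First I would fix a reference region (on the sphere, the face containing a chosen basepoint; in the plane, the unbounded face). By the Jordan Curve Theorem each $C_{i}$ separates the plane into two components, and I would call \emph{inside} $C_{i}$ the component not containing the basepoint. For a region $R$, pick any point $p \in R$ and define
$$c(R) := \#\{\, i : p \text{ lies inside } C_{i} \,\} \bmod 2.$$
This is well defined: since every $C_{i}$ is disjoint from $R$ and the connected set $R$ cannot straddle $C_{i}$, the inside/outside status of $p$ relative to each $C_{i}$ is constant on $R$, so the count is independent of the choice of $p$.

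Next I would verify that $c$ is a proper $2$-coloring. Two faces $R$ and $R'$ are neighbors precisely when they share a boundary arc $a$. Because the $C_{i}$ are pairwise disjoint, this arc lies on a single curve, say $C_{j}$. Choosing $p \in R$ and $p' \in R'$ near $a$ and joined by a short transversal segment that meets $H$ only at the point where it crosses $a$, the Jordan Curve Theorem says this crossing toggles the inside/outside status with respect to $C_{j}$ while leaving it unchanged for every $C_{i}$ with $i \neq j$, since the segment meets no other curve. Hence the nesting count changes by exactly one, giving $c(R) \neq c(R')$.

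The only genuinely delicate point is the bookkeeping of inside/outside when the curves are nested, together with the absence of a canonical outside on the sphere; fixing the basepoint once and for all resolves this and makes the local crossing computation unambiguous. I would also remark that the even-length hypothesis plays no role here: $2$-colorability of the regions is a purely topological consequence of the disjointness of the Jordan curves, and the parity of the cycles will instead enter later, when weak Hamiltonians are matched with $4$-colorings.
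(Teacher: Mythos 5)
Your proposal is correct and is essentially the paper's own argument: the coloring you call $c(R)$ is exactly the paper's $\phi(x) \equiv \sum_i \chi_{C_i}(x) \pmod 2$, the parity of the nesting count. You supply the verification (that crossing a single boundary arc toggles exactly one characteristic function) which the paper leaves implicit, but the construction and the key idea are the same.
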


\begin{proof}
Let $H=\{C_{1},\ldots, C_{n}\}$ be a weak Hamiltonian.
Each cycle $C_i$ defines an inside and an outside and hence a characteristic function $\chi_{C_i}$ assigning the value $1$ to points of its interior and $0$ on its exterior.
The function $\phi$ defined by
$$\phi(x) \equiv \sum_i \chi_{C_i} (x) \pmod 2$$
gives the requested $2$ coloring.
\end{proof}

\begin{thm}
\label{thm:main}
A planar cubic map is $4$ colorable if and only if it admits a weak Hamiltonian.
\end{thm}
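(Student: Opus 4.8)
The plan is to encode the four colours as the Klein four-group $K=\IZ/2\IZ\times\IZ/2\IZ$, so that a four-colouring $\phi\colon F(G)\to K$ amounts to a pair $(\phi_{1},\phi_{2})$ of $\IZ/2\IZ$-valued functions, each of which one pictures as a two-colouring in the sense of Proposition~\ref{prop:2color}. Properness of $\phi$ then reads: across every edge at least one of $\phi_{1},\phi_{2}$ changes value. The technical hinge is a purely local observation at a cubic vertex. To each edge $e$ bordering faces $F,F'$ attach the difference $c(e)=\phi(F)+\phi(F')\in K$; since $G$ is bridgeless the two faces are distinct and adjacent, so $c(e)\neq 0$, and as each of the three faces around a vertex is counted by exactly two of the incident edges, the three values $c(e)$ at a vertex sum to $0$. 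Three nonzero elements of $K$ summing to zero are necessarily the three distinct nonzero elements $(1,0),(0,1),(1,1)$. I expect this vertex computation -- the group-theoretic form of Tait's correspondence -- to be the conceptual heart of the proof, after which both implications are bookkeeping.

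For ``weak Hamiltonian $\Rightarrow$ four-colourable'' I would reuse the existing machinery. Given a weak Hamiltonian $H$, let $\phi_{1}$ be the two-colouring of Proposition~\ref{prop:2color}; by its construction $\phi_{1}$ flips exactly across the edges lying on the cycles of $H$, that is across $H$. Pick any mutation $H'$ of $H$, which is again a weak Hamiltonian by Lemma~\ref{lem:freecycle}, and let $\phi_{2}$ be its two-colouring, flipping exactly across $H'$. Setting $\phi=(\phi_{1},\phi_{2})$, properness is immediate: by Proposition~\ref{pro:properties}(1) the map is the union $H\cup H'=E(G)$, so every edge lies in $H$ or in $H'$ and hence forces $\phi_{1}$ or $\phi_{2}$ to change. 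Thus adjacent faces receive distinct elements of $K$.

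Conversely, from a proper $\phi=(\phi_{1},\phi_{2})$ I would recover a weak Hamiltonian by running this backwards. Let $H$ be the set of edges across which $\phi_{1}$ changes, equivalently those $e$ with $c(e)\in\{(1,0),(1,1)\}$. The local observation shows that at each vertex exactly two of the three incident edges lie in $H$, so $H$ is a $2$-factor; and those two edges carry the distinct labels $(1,0)$ and $(1,1)$, so these labels alternate along any cycle of $H$, forcing every cycle to be even. Hence $H$ is a weak Hamiltonian. The one place that genuinely needs the local lemma is this backward step, where spanning-ness and evenness must be secured at once; both drop out of the fact that the three edge-differences at a vertex exhaust the nonzero elements of $K$, which is why I would establish that computation before anything else.
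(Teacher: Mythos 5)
Your proof is correct, and in the forward direction it is the same as the paper's: take the two-colouring $\phi_1$ of Proposition~\ref{prop:2color} for $H$, the two-colouring $\phi_2$ for a mutation $H'$, and use $H\cup H'=E(G)$ to get properness of the pair. Where you genuinely diverge is in the converse. The paper works face-side: it shows $H_i$ is a $2$-factor by noting that $\phi_i$ restricted to the three mutually adjacent faces at a cubic vertex must take both values, so exactly one face is the odd one out and its two bounding edges are the ones in $H_i$; evenness is then obtained separately by walking along a cycle of $H_i$ and observing that the faces bounded on its outside alternate under $\phi_j$. You instead pass to the edge labels $c(e)=\phi(F)+\phi(F')\in K\setminus\{0\}$ and the single local fact that the three labels at a vertex sum to zero, hence are the three distinct nonzero elements of $K$ --- this is Tait's correspondence between four-colourings and proper $3$-edge-colourings, which the paper never makes explicit. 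That one computation delivers both conclusions at once: $H=c^{-1}(\{(1,0),(1,1)\})$ meets each vertex in exactly two edges (so it is a $2$-factor), and those two edges carry distinct labels, so the labels alternate along each cycle and force even length. Your packaging is cleaner and localises all the work in one lemma; the paper's version is more geometric and, via the alternation of outside faces, makes visible why planarity and the Jordan curve theorem enter. Both arguments are sound.
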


\begin{proof}
Before we start, let us pick a bijection $\gamma : J_{4} \rightarrow J_{2}\times J_{2} $.\\
Assume the graph admits a weak Hamiltonian $H_{1}$.
By Lemma~\ref{lem:freecycle}, there is another weak Hamiltonian $H_{2}$ such that all edges belong to either one of these two cycles.
By the proof of Proposition~\ref{prop:2color}, each cycle defines a $2$-coloring, say $\phi_{1}$ and $\phi_{2}$, on the restricted graphs.
Define a $4$ coloring on the complete map as 
$$\Phi= \gamma^{(-1)} (\phi_{1} \times \phi_{2})$$
Since each edge belongs either to $H_{1}$ or $H_{2}$, we are guaranteed that distinct faces of the map are colored differently through $\Phi$.

Conversely, assume the map is four colored by the function $\Phi$.
Define new functions $\phi_{1,2}$ with image in $J_{2}$ as  
$$\phi_{i}=\pi_{i}\circ \gamma$$
where $\pi_{i}$ represents the projection of $J_{2}\times J_{2} $ onto its i-th component.
We can now define $H_{i}$ as union of the edges between regions of different color with respect to $\phi_{i}$. \\
To see that $H_{i}$ is a 2-factor, note that each vertex of the map has degree $3$ hence each $\phi_{i}$, restricted to the three regions incident to a chosen vertex must be surjective.
Therefore, one of the the three regions has a different image under $\phi_{i}$ than the other $2$ regions. 
The edges of $H_{i}$ incident to the vertex are exactly the two edges bounding this region.\\
Pick a cycle in $H_{i}$.
Given the orientation, we can talk about the inside bounded regions and the outside bounded regions.
As the outside pointing edges belong to $H_{j}$, the regions bounded outside of $H_{i}$ form a sequence of faces whose image alternates with respect to $\phi_{j}$: they come in even number.
The same holds for regions on the inside and we have thus that each cycle of $H_{i}$ has even length.
\end{proof}

\begin{figure}[!h]
{\includegraphics[height=1.5cm]{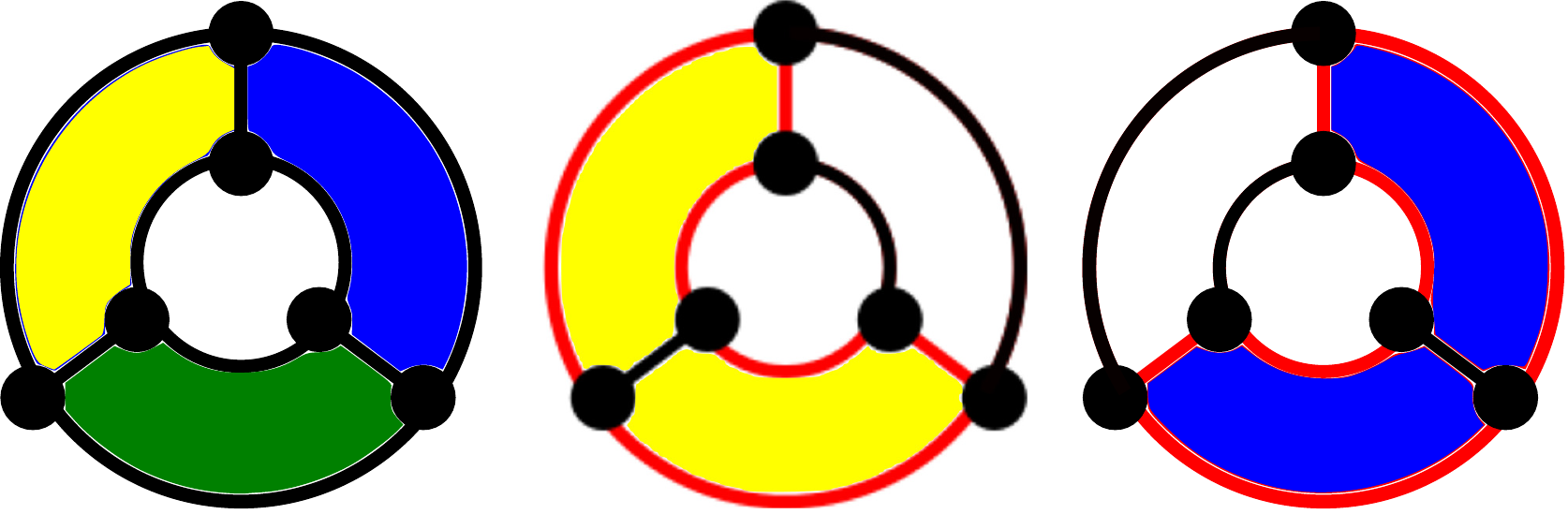}}
\caption{An illustration of Theorem~\ref{thm:main}}
\label{fig:4color}
\end{figure}

\begin{rmk}
\label{rmk:freedom}
In the proof of Theorem~\ref{thm:main}, there is some arbitrariness in the choice of the bijection $\gamma : J_{4} \rightarrow J_{2}\times J_{2} $.
Actually, up to isomorphism, we can see that any coloring gives rise to 3 distinct weak Hamiltonians, all of which are mutation of one another.
\end{rmk}

\section{Moduli}

In this section we use the notion of mutation to endow weak Hamiltonians and, \emph{en passant}, four colorings with a graph structure. 
The properties of these graphs characterize the different colorings that a map can have.

\begin{defi}
Let $G$ be a planar map.
Its {\bf weak Hamiltonian graph} $\fd(G)$ has as vertices the weak Hamiltonians of $G$ and, two vertices are connected by an edge when the associated weak Hamiltonians are related by a mutation.
\end{defi}

\begin{pro}Structure of $\fd(G)$:
\begin{enumerate}
\item Given two neighboring vertices $h_1$, $h_2$ there exists a third vertex $h_3$ neighbor to the original ones such that $h_1 + h_2 + h_3 = 0$ in $\IF_2^{|E|}$.
\item All vertices of $\fd(G)$ come in 3-vertex cliques which we call {\bf chromatic cliques}.
\item The degree of a vertex is $2^N$ where $N$ is the number of its cycles.
\end{enumerate}
\end{pro}

\begin{proof}
The first point follows from Remark~\ref{rmk:freedom}. 
The cliques of the second point are formed by the $h_{i}$ of the first point.
Three is immediate
\end{proof}

We can now associate a second graph to $G$.

\begin{defi}
The {\bf chromatic graph} of $G$, $\chi(G)$ is the graph obtained by contracting the chromatic cliques in $\fd(G)$.
\end{defi}

\begin{ex}
\begin{enumerate}

\item The weak Hamiltonian graph of the 4-prism is given in Figure~\ref{fig:fdgraph}. We see in the weak Hamiltonian graph the $\mathfrak{S}_3$ symmetry of the original cube.

\begin{figure}[!h]
{\includegraphics[height=4in]{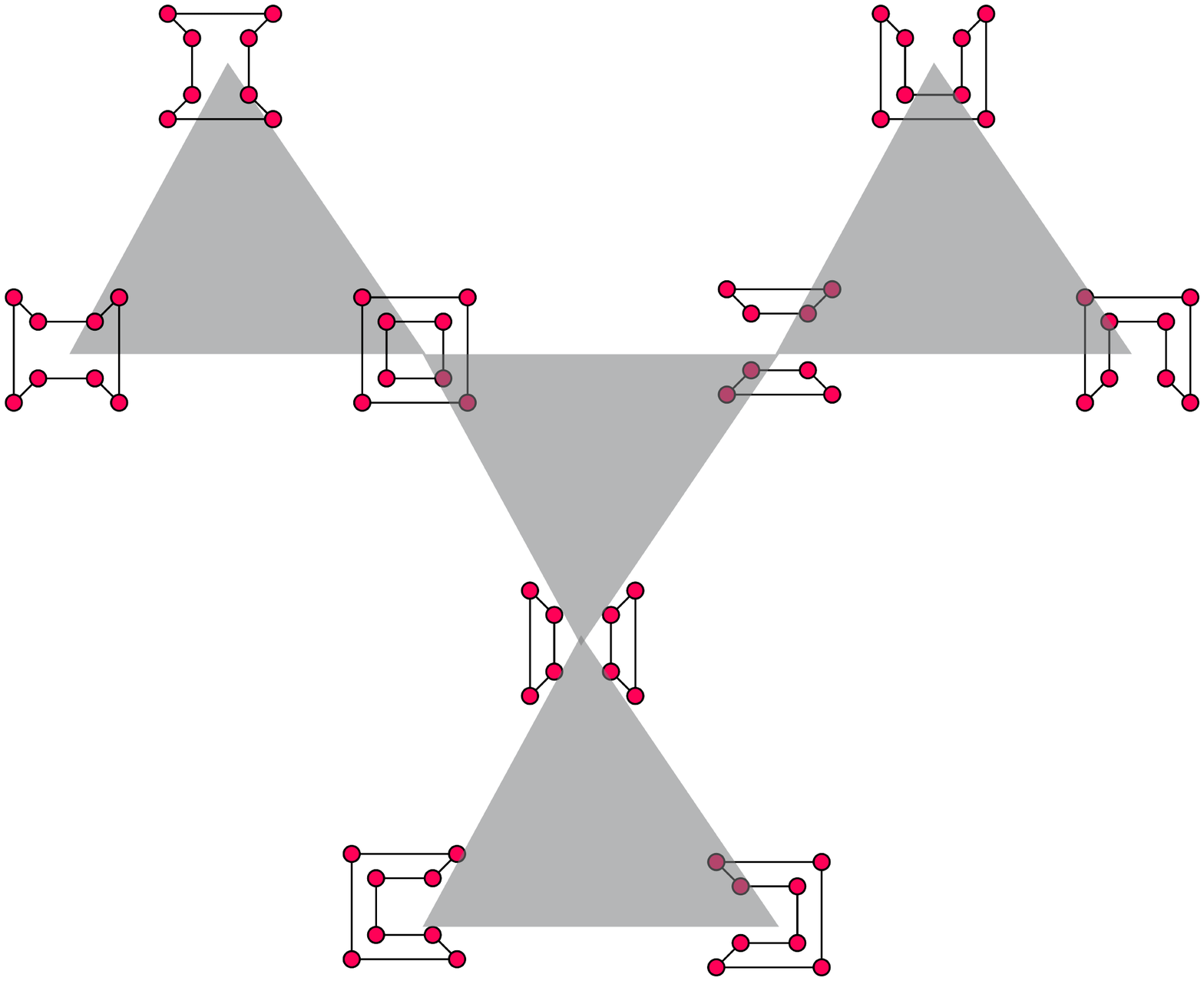}}
\caption{Weak Hamiltonian Graph $\fd\left(\Pi_4\right)$ of the 4-prism. Chromatic cliques are marked in grey.}
\label{fig:fdgraph}
\end{figure}

\item The chromatic graph of the 5-prism is the cyclic graph with five elements : $$\chi\left(\Pi_5\right)=C_5.$$

\end{enumerate}
\end{ex}

\begin{rmk}
A priori, when passing from $\fd$ to $\chi$ there is some information lost beyond the actual weak Hamiltonians.
The chromatic graph ignores how distinct colorings are related through mutations.
For example, there are two possible weak Hamiltonian graphs who contract to a chromatic graph $C_3$: three 3-vertex cliques in a cycle and 3 coincident 3-vertex cliques.
\end{rmk}



\begin{thebibliography}{9}

\bibitem{AH1} K. Appel \& W. Haken.
Every Planar Map is Four Colorable Part I. Discharging.
Illinois Journal of Mathematics 21: 429Ð490. 1977

\bibitem{AH2} K. Appel, W. Haken \& J. Koch.
Every Planar Map is Four Colorable Part II. Reducibility.
Illinois Journal of Mathematics 21: 491Ð567. 1977

\bibitem{Ber}G. Berck. 
Invariants de Vassiliev et repr\'esentation int\'egrale. 
Senior thesis UCL. Louvain-la-Neuve. 1998

\bibitem{BLW}N. Biggs, ; E. Lloyd \& R. Wilson. 
Graph Theory
Oxford. Oxford University Press. 1986

\bibitem{Dal} J. Dalton. 
Extraordinary facts relating to the vision of colours: with observations. 
Memoirs of the Literary and Philosophical Society of Manchester 5: 28-45. 1798

\bibitem{Die} R. Diestel.
Graph Theory.
Electronic Edition 2010. Retrieved from \url{http://diestel-graph-theory.com/}

\bibitem{Fow}T. Fowler.
Unique coloring of planar graphs. 
Ph.D. Thesis Georgia Institute of Technology. Atlanta. 1998

\bibitem{LZ} S. Lando \& A. Zvonkin
Graphs on Surfaces and Their Applications.
Springer-Verlag, 2004

\bibitem{RSST} N. Robertson, Neil, D. Sanders, P. Seymour \& R.Thomas.
The Four-Colour Theorem 
J. Combin. Theory Ser. B 70 (1): 2Ð44. 1997


\end{thebibliography}
\end{document}